
\documentclass[letterpaper, 10 pt, conference]{ieeeconf}  
\usepackage{style}
\usepackage[english]{babel}
\IEEEoverridecommandlockouts                              
\overrideIEEEmargins


\title{\LARGE \bf
Efficient Rank Minimization to Tighten Semidefinite Programming for Unconstrained Binary Quadratic Optimization
}


\author{ Roman Pogodin$^{1}$, Mikhail Krechetov$^{2}$ and Yury Maximov$^{3}$
\thanks{The work done at Skolkovo Institute of Science and Technology is supported partly by the grant of the President of Russian Federation for young PhD MK-9662.2016.9 and partly by the RFBR grant 15-07-09121a. The work at LANL was carried out under the auspices of the National Nuclear Security Administration of the US Department of Energy under Contract No. DE-AC52-06NA25396.}
\thanks{$^{1}$Center for Energy Systems, Skolkovo Institute of Science and Technology
	Skolkovo Innovation Center, Building 3, 
	Moscow 143026 Russia
  \texttt{R.Pogodin@skoltech.ru}}
\thanks{$^{2}$Center for Energy Systems, Skolkovo Institute of Science and Technology
	Skolkovo Innovation Center, Building 3, 
	Moscow 143026 Russia
  \texttt{M.Krechetov@skoltech.ru}}%
\thanks{$^{3}$Center for Energy Systems, Skolkovo Institute of Science and Technology, and
  Theoretical Division T-4 and CNLS, Los Alamos National Laboratory 
  MS-B258,  Los Alamos, 87545 NM, USA
  \texttt{yury@lanl.gov}}%
}

\begin{document}

\maketitle
\thispagestyle{empty}
\pagestyle{empty}

\begin{abstract}

We propose a method for low-rank semidefinite programming in application to the semidefinite relaxation of unconstrained binary quadratic problems. The method improves an existing  solution of the semidefinite programming relaxation to achieve a lower rank solution. This procedure is computationally efficient as it does not require projecting on the cone of positive-semidefinite matrices. Its performance in terms of objective improvement and rank reduction is tested over multiple graphs of large-scale Gset graph collection and over binary optimization problems from the Biq Mac collection.

\end{abstract}

\section{Introduction}

Binary quadratic optimization is a classical combinatorial optimization problem, which finds a wide range of applications in computer vision \cite{wang2017large, ren2014optimizing}, circuit layout design \cite{Barahona,kleinhans1991gordian},   computing ground states of Ising model \cite{MCising}, as well as a number of combinatorial favors \cite{parker2014discrete,garey2002computers,cormen2009introduction}.  For  comprehensive  list of applications we refer to \cite{Deza, UBQP}.

A special case of this problem is unconstrained binary quadratic programming (UBQP). It is a classical  NP-hard problem, hardly possible to be solved exactly in polynomial time. A number of relaxation techniques substituting the original problem to a convex one has been proposed. Linear, second-order cone and semidefinite relaxations are among them. 

Semidefinite programming relaxation (SDP) has been shown to lead to tighter approximation than other relaxation methods for many combinatorial optimization problems including  binary quadratic optimization ones (\cite{Goemans:1995:IAA:227683.227684} and \cite{Khot:2002:PUG:509907.510017}).  Still, SDP reduces the problem to convex optimization over the cone of positive semidefinite matrices and outputs a full-rank matrix requiring to be rounded  to obtain a vector valued solution. 

In this paper we address the question of low-rank semidefinite programming, which is aimed at strengthening results of the standard SDP relaxation. While a number of methods has been proposed (see \cite{Lemon2016} for a survey),  we discuss direct rank minimization of an obtained SDP solution. We use smoothed rank approximations in order to reduce the rank of the SDP solution without significant loss in the optimal value. For this purpose we propose an efficient first-order optimization procedure which does not require projecting on the feasible set. This will potentially lead to more accurate rounding procedure allowing to obtain a better vector solution.

In the rest of the paper we discuss problems of the form
\begin{equation}
\begin{split}
\max\ &x\trans Ax,\\
\st\ &x\in\{-1, 1\}^n,
\end{split} 
\label{eq::bin_quadr_problem}
\end{equation}
where $A$ is an arbitrary symmetric $n\times n$ matrix.

There are many well-known problems that can be naturally written in this form: the maximum cut problem, the 0-1 knapsack problem, the linear quadratic regulator and many others. 

Not all of these formulations appear in the form \ref{eq::bin_quadr_problem}. Instead, some problems might have a linear term of the from $b\trans x$ and a 0-1 constraint, that is $x\in \{0,1\}^n$. Nevertheless, such problems might be converted to the form \ref{eq::bin_quadr_problem}, as showed in \cite{Helmberg1995}. The corresponding derivation is showed in the appendix.

For instance, in the maximum cut (max-cut) problem we want to find a partition of graph's vertices into two disjoint sets such that the sum of edges between these sets is maximal. This problem is NP-hard \cite{Karp1972}, however, it can be solved approximately. For small instances (up to $50$ vertices) the maximum cut may be solved efficiently with the branch-and-bound algorithm \cite{Malick}, \cite{Rinaldi}. For bigger problems (around $1000$ vertices) the semidefinite relaxation discussed below gives the best known approximations. It is crucial for many applications to improve existing approximations or to extend them on large-scale instances.

Quadratic boolean programming (\ref{eq::bin_quadr_problem}) is a particular case of quadratically constrained quadratic problems (QCQP), so general heuristics for this class of problems may be applied. See for example the recent paper \cite{ParkBoyd}.

\subsection{Semidefinite Relaxation.}
Standard semidefinite (SDP) relaxation leads to the following matrix problem:
\begin{equation}
\begin{split}
\max\ & \tr(AX),\\ 
\st\ &\diag X = 1_n,\\
&X\trans=X,\ X\succeq 0. 
\end{split}
\label{eq::sdp}
\end{equation}
This problem is convex and thus could be efficiently solved (we will discuss the particular method below). 

To get a binary solution of the initial problem \ref{eq::bin_quadr_problem}, we decompose the solution of the SDP relaxation $X=V\trans V$ (via Cholesky decomposition), then take a unit vector with uniformly distributed direction $r$. For each column of $V$, which is $v_i$, we take $x_i=\sign v_i\trans r$. If $A\succeq 0$, the mean result of this procedure is not worse than $2/\pi$ of the maximum value \cite{NesterovSDP2}. In a special case when $A$ is a Laplacian of a graph with non-negative weights this bound can be further improved to $\approx 0.878$ \cite{Goemans:1995:IAA:227683.227684}. Note that this famous results cannot be improved if the Unique Games Conjecture is true \cite{Khot:2002:PUG:509907.510017}.

However, this relaxation is exact, when $\rank X=1$. Moreover, low-rank solutions lead to a fewer number of possible binary sets in the rounding procedure described above. This idea becomes more clear if one considers half-space classifiers in $\RR^r$ and $n$ points. The maximum number of different labellings is controlled by Sauer's lemma, and grows as $(n+1)^r$. A more accurate discussion of Sauer's lemma can be found in \cite{Bartlett2003}. Hence, if we lie in the vicinity of a correct solution, we would get the correct result from the rounding procedure more likely. This motivation brings us to the idea of low-rank semidefinite programming.

\subsection{Related Work.}
The existence of low-rank solutions of the problem \ref{eq::sdp} is a fundamental fact discussed in \cite{Pataki98} and \cite{Barvinok1995}. From these works we know that for such problems there exist solutions of the rank at most $r$, where $r(r+1)\leq 2n$. 

Knowing about the existence of low-rank solutions, we may discuss popular approaches in low-rank semidefinite programming. This section is mostly based on the book \cite{Lemon2016}.

Firstly, the existence of low-rank solutions is used in Burer and Monteiro method \cite{Burer2003}. It is based on the factorization $X=VV\trans$, where $V$ is an arbitrary matrix of size $n\times r$. This problem is non-convex, though it requires much less computations and performs well in practice. However, finding the minimum rank solution of multiple runs of the algorithm with different $r$ (one increments $r$ until resulting point satisfies particular conditions) might be computationally ineffective.

Another approach implies relaxation of the equality constraint \cite{So2008}. It can be written as following:
\[
\tr \brackets{AX}=b \Rightarrow \beta b\leq \tr\brackets{AX}\leq \alpha b,
\] 
where $\alpha$ and $\beta$ control the rank of the solution. In our case the problem \ref{eq::sdp} has $n$ equality constraints of the form $\tr\brackets{XE_{ii}}=1$, where $E_{ii}$ is a zero matrix with unit in the position $i,i$. All $n$ constraints together form $\diag X=1$. Though this approach allows to reduce the rank, the resulting solution satisfies our constraints only approximately. 

The next approach implies that we have already got a solution of the problem \ref{eq::sdp}. This allows us to reduce its rank via some kind of rank minimization procedure keeping the value of $\tr\brackets{AX}$ optimal. Note that rank minimization is NP-hard, so this formulation needs to be further relaxed to be efficiently solvable. This approach has been discussed in the literature, but we have not found a comparison of different ways to minimize the rank in application to boolean quadratic programming.

In our work we propose an efficient first-order algorithm, which performs rank minimization. It starts from the solution of the SDP relaxation. This solution is provided by either CVX interior-point algorithm \cite{cvx}, \cite{gb08} or Burer-Monteiro low-rank procedure, implemented in SDPLR \cite{Burer2003}.

\section{Rank minimization}

In this section we introduce the problem of rank minimization for the SDP relaxation. After that we describe existing approaches for rank minimization, and discuss their pros and cons.

\subsection{Problem.}

Let $X^*$ be a solution of \ref{eq::sdp} and let $SDP$ be its value and $W^*$ be the value of the binary solution obtained by the rounding procedure. Starting from $X^*$, we want to solve the following problem:
\begin{equation}
\begin{split}
\min\ &\rank X\\
\st\ &\diag X = 1_n,\\
&X\trans=X,\ X \succeq 0,\\
&\tr (AX)= SDP,
\end{split}
\label{eq::min_rank_norelax}
\end{equation}

\subsection{Objective function's relaxations.} 
However, minimizing the rank is NP-hard. In order to solve the problem \ref{eq::min_rank_norelax}, we replace $\rank X$ with a smooth surrogate, usually non-convex. There is a number of rather popular ways to do that, discussed below.

First of all, the so-called trace norm (or nuclear), defined as 
\begin{equation}
\|X\|_*=\sum_i\sigma_i,
\label{eq::trace_norm}
\end{equation}
where $\sigma_i$ is the $i$-th singular value \cite{Lemon2016}. In our case, every feasible point of the problem has $\tr X=\|X\|_*=n$, hence this relaxation does not make sense.

Next, the so-called log-det heuristic is to replace $\rank X$ with the concave function
\begin{equation}
\log\det\brackets{X+\eps I}.
\label{eq::logdet}
\end{equation}
This heuristic is discussed, for example, in \cite{Lemon2016}, \cite{Fazel2003}. Though it performs well in practice, it requires an iterative procedure (described in \cite{Fazel2003}) with an SDP problem on each iteration. This problem allows to use Burer-Monteiro method \cite{Burer2003}, but it still needs several runs of this algorithm (at least one for each iteration), which is compatible with rank increment in the original Burer-Monteiro procedure. 

The next two relaxations are singular value-based, and in the next section we show that, in fact, they allow an efficient first-order procedure, that does not require projections on the semidefinite cone and hence is computationally efficient.

The first one is the non-convex function of the following form
\begin{equation}
\Phi (X,\,\eps )=(1+\eps^q)\tr\brackets{X\trans(XX\trans+\eps I)^{-1}X}.
\label{eq::rank_singular}
\end{equation}
for $q\in\mathbb{Q}\cap\left[0,1\right]$. Its properties are discussed in \cite{Li}. An important fact is that this relaxation is quite close to the rank:
\[
\begin{split}
&\left|\rank X - \Phi(X,\,\eps) \right|\leq \\
&\leq\eps^q\max\left\lbrace\rank X, \sum_{i=1}^{\rank X}\left|\frac{\eps^{1-q}}{\sigma_i^2(X)}-1\right|\right\rbrace.
\end{split}
\]

The second relaxation utilizes so-called smoothed Schatten p-norms. They are defined as following:
\begin{equation}
\schatteneps{X}{p}^p = \sum\limits_{i\geq 1} \left(\sigma_i^2 + \eps\right)^{\frac{p}{2}} = \tr\left(X\trans X+\varepsilon I\right)^{\frac{p}{2}}.
\label{eq::schatten_norm_smoothed}
\end{equation}
With $p\rightarrow 0$ and $\eps=0$ we get the rank function exactly. Note that for $p<1$ this function is non-convex, and for $p=1$ it is identical to the nuclear norm. Applications of this relaxation can be found in \cite{Nie:2012} and \cite{Mohan2012}. Both papers introduce an iteratively reweighted least squares (IRLS) algorithm in order to solve this problem. However, in our case it requires solving of an SDP problem with quadratic objective function at each iteration. Hence, it cannot be applied to large-scale problems.

\subsection{Constraint relaxation.}
If we omit the last constraint in the problem \ref{eq::min_rank_norelax}, which is $\tr\brackets{AX}=SDP$, rank minimization might occasionally converge to a solution of rank one. Moreover, it may converge to a low-rank vicinity of such solution. If the SDP relaxation is not tight, then this constraint prevents the procedure from such behavior. 

We can also obtain a binary solution after solving the SDP relaxation. If we denote the objective value at this point $W^*$, then this value would be a natural lower bound on $\tr\brackets{AX}$. It means that all solutions of rank one above this value are actually better, than the one we got.

This motivation allows us to relax the problem further, and solve (along with rank approximations) the following one:
\begin{equation}
\begin{split}
\min\ &\rank X\\
\st\ &\diag X = 1_n,\\
&X\trans=X,\ X \succeq 0,\\
W^*\le\,&\tr (AX)\le SDP.
\end{split}
\label{eq::min_rank}
\end{equation}
Obviously, the binary solution, that gives $W^*$, is also a solution of the last problem. However, the typical procedure for solving \ref{eq::min_rank} would start from the SDP matrix in order to improve the resulting cut. Since the rank of this matrix is not unit in general, we need to optimize the objective function further.  

An important consequence of this relaxation will be clear in the next section as it allows to avoid projecting on the feasible set.

For completeness we emphasize that our approach cannot be generalized to QCQP problems. To avoid projecting on the set, it relies significantly on the special structure of constraints that occur in the problem \ref{eq::min_rank}.

\section{Main result}
In this section we introduce an efficient first-order procedure that solves the problem \ref{eq::min_rank} without projecting on the positive-semidefinite cone. It can be applied to the singular value relaxation \ref{eq::rank_singular} and smoothed Schatten p-norm \ref{eq::schatten_norm_smoothed}.

\subsection{Efficient first-order procedure.}
The problem \ref{eq::min_rank} allows a natural reparametrization to a vector problem of dimension $n(n-1)/2$. To do that, we consider the upper triangular part of the matrix:
\begin{equation}
X=\begin{pmatrix}
1 & x_1 & \dots & x_{n-1}\\
x_1 & 1 & x_n & \dots\\
\dots & \dots & \dots &\dots
\end{pmatrix}.
\label{eq::matrix_vectorized}
\end{equation}
In this case for indices $i,j$ we get $X_{i,j}=x_d$, where $d=\sum_{k=1}^{i-1}(n-k)+j-i$. This satisfies two constraints immediately: $X\trans=X$ and $\diag X=1_n$.

Such reparametrization changes the gradient of the matrix function $f(X)$:
\[
\begin{split}
\parderiv{f(X(x))}{x_{(d)}}=\sum_{i,j}\parderiv{f(X)}{x_{i,j}}\parderiv{x_{i,j}}{x_{(d)}}=\parderiv{f(X)}{x_{i',j'}}\parderiv{x_{i',j'}}{x_{(d)}}+\\
+\parderiv{f(X)}{x_{j',i'}}\parderiv{x_{j',i'}}{x_{(d)}}=\parderiv{f(X)}{x_{i',j'}}+\parderiv{f(X)}{x_{j', i'}},
\end{split}
\]
where $i', j'$ relate to the vector of index $d$. 

Obviously, the upper bound $\tr (AX)\le SDP$ is always satisfied. Moreover, violation of the lower one $W^*\le\tr (AX)$ implies that we got the point that could not improve our binary solution, hence we need to stop.

The last constraint is $X\succeq 0$. We show that proper choice of the gradient step results in a feasible point in case of singular value relaxation and Schatten norms.

First of all, the gradient of the singular value relaxation is
\begin{equation}
\parderiv{\Phi(X,\,\eps)}{X}=2\eps(1+\eps^q)\brackets{XX\trans+\eps I}^{-2}X.
\label{eq::rank_singular_two_deriv}
\end{equation}
For Schatten p-norms we have
\begin{equation}
\derivative{\schatteneps{X}{p}^p}{X}=pX\left(X\trans X+\varepsilon I\right)^{\frac{p-2}{2}}.
\label{eq::schatten_smoothed_derivative}
\end{equation}

If $X$ is symmetric and PSD, then from SVD factorization both gradients are symmetric. Thus in vector parametrization we simply need to multiply the gradient by $2$, and then force diagonal elements to be unit.

For further convenience we denote a symmetric matrix with unit diagonal, upper triangular part of which is constructed from the vector $x$, as $X(x)$. Finally, we show the following:

\begin{theorem}
Let $f(x)$ be the vector-parametrized singular value relaxation \ref{eq::rank_singular} of the matrix $X(x)\succeq 0$. Then for $\alpha\leq \frac{\eps}{4(1+\eps^q)}$ we get $X(x-\alpha \nabla f(x))\succeq 0$.
\label{theorem::singval_psd_constraint}
\end{theorem}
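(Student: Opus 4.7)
The plan is to lift the vector update back to matrix form and reduce both PSD checks to a one-dimensional inequality in the eigenvalues of $X$. Because the reparametrization freezes the diagonal at $1$, and because $\partial f/\partial x_d = 2\,(\partial\Phi/\partial X)_{ij}$ for the off-diagonal index $d \leftrightarrow (i,j)$, the map $x \mapsto x - \alpha \nabla f(x)$ corresponds in matrix form to
\[
X(x - \alpha \nabla f(x)) \;=\; X - 2\alpha\, \nabla_X\Phi \;+\; 2\alpha\,\diag(\nabla_X\Phi),
\]
where the third term is the correction that cancels the would-be change on the diagonal. It then suffices to verify (i) $X - 2\alpha\, \nabla_X\Phi \succeq 0$ and (ii) $\diag(\nabla_X\Phi) \succeq 0$ as a diagonal matrix.

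For (i), I would use the eigendecomposition $X = U\Sigma U\trans$ (which coincides with the SVD because $X$ is symmetric PSD). Substituting into \eqref{eq::rank_singular_two_deriv} and factoring $\Sigma$ on the right gives
\[
X - 2\alpha\, \nabla_X\Phi \;=\; U \Sigma \bigl(I - 4\alpha\eps(1+\eps^q)(\Sigma^2 + \eps I)^{-2}\bigr) U\trans .
\]
Since $\Sigma \succeq 0$, the right-hand side is PSD as soon as the bracketed diagonal is entrywise non-negative, i.e. $4\alpha\eps(1+\eps^q) \leq (\sigma_i^2 + \eps)^2$ for every $i$. Using the trivial lower bound $(\sigma_i^2+\eps)^2 \geq \eps^2$ (because $\sigma_i \geq 0$), this reduces to the hypothesis $\alpha \leq \eps/(4(1+\eps^q))$.

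For (ii), the same diagonalization yields $\nabla_X\Phi = 2\eps(1+\eps^q)\, U(\Sigma^2 + \eps I)^{-2}\Sigma\, U\trans$, whose eigenvalues $2\eps(1+\eps^q)\sigma_i/(\sigma_i^2+\eps)^2$ are all non-negative, so $\nabla_X\Phi \succeq 0$. Since the diagonal entries of any PSD matrix are non-negative ($e_i\trans M e_i \geq 0$), the diagonal matrix $\diag(\nabla_X\Phi)$ is itself PSD. Adding it to the PSD matrix obtained in (i) preserves positive semidefiniteness, which yields the claim.

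The main step I expect to require care is the bookkeeping that turns the vector step into the matrix update displayed above: tracking the factor of two that comes from $\partial f/\partial x_d = 2\,(\partial\Phi/\partial X)_{ij}$ and recognizing that pinning the diagonal to $1$ is equivalent to \emph{adding back} $2\alpha\,\diag(\nabla_X\Phi)$ rather than performing a projection. Once this is in place, both PSD checks collapse to the single scalar bound $(\sigma^2+\eps)^2 \geq \eps^2$, which is exactly what produces the clean form of the step-size constraint.
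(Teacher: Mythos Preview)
Your proof is correct and follows essentially the same route as the paper's: both lift the vector step to the matrix update ``full gradient step (times two) plus diagonal correction,'' diagonalize via the eigendecomposition of $X$ to reduce the PSD check of $X-2\alpha\nabla_X\Phi$ to the scalar inequality $(\sigma_i^2+\eps)^2\ge 4\alpha\eps(1+\eps^q)$, and then argue that resetting the diagonal to ones only adds a PSD diagonal matrix. The one cosmetic difference is that you conclude $\diag(\nabla_X\Phi)\succeq 0$ from $\nabla_X\Phi\succeq 0$ via $e_i^\top M e_i\ge 0$, whereas the paper writes out the diagonal entry $\sum_j U_{ij}^2\,\sigma_j/(\sigma_j^2+\eps)^2$ explicitly; these are the same computation.
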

\begin{proof}
The gradient step in upper-triangular parametrization is equivalent to the ordinary gradient step (multiplied by $2$), and then substituting diagonal elements to units. We are going to show that the first step results in a PSD matrix and then the second step keeps matrix PSD.

Consider a symmetric PSD point and its SVD decomposition $X=USU\trans$. Hence for a step $\alpha$ and $C=2\eps (1+\eps^q)$ the new point is
\[
\begin{split}
&X-2\alpha C(XX\trans+\eps I)^{-2}X=\\
&=USU\trans-2\alpha C\brackets{US^2U\trans+\eps I}^{-2}USU\trans=\\
&=U\brackets{S-2\alpha C\brackets{S^2+\eps I}^{-2}S}U\trans.
\end{split}
\]

Hence the positive-semidefiniteness of the resulting matrix is equivalent to such characteristic of the expression in brackets. It is a diagonal matrix. 

If $S_{ii}=0$, then the corresponding diagonal elements are obviously zero. Otherwise we need it to be positive:
\[
\begin{split}
&S_{ii}-2\alpha C\brackets{S_{ii}^2+\eps I}^{-2}S_{ii}\geq 0\Rightarrow\\
&\Rightarrow \alpha \leq \frac{\brackets{S_{ii}^2+\eps I}^{2}}{2 C}=\frac{\brackets{S_{ii}^2+\eps I}^{2}}{4\eps(1+\eps^q)}.
\end{split}
\]
Therefore, it is enough to take
\[
\alpha \leq \frac{\eps}{4(1+\eps^q)}.
\]

Now we want to show that substituting diagonal elements with units is equivalent to adding a diagonal matrix with non-negative entries. In this case, a sum of two PSD matrices is PSD.

It is also equivalent to the fact that all diagonal elements of the gradient are non-negative. This is always true for a symmetric and PSD matrix $X=USU\trans$:
\[
\begin{split}
&\brackets{(XX\trans+\eps I)^{-2}X}_{ii}=\brackets{U\brackets{S^2+\eps I}^{-2}SU\trans}_{ii}=\\
&=\sum_{j}U_{ij}U_{ij}\brackets{S_{jj}^2+\eps I}^{-2}S_{jj}=\\
&=\sum_{j}U_{ij}^2\brackets{S_{jj}^2+\eps I}^{-2}S_{jj}\geq 0.
\end{split}
\]
This observation completes the proof.
\end{proof}

The same technique allows us to get a similar result for smoothed Schatten p-norms:
\begin{restatable}{theorem}{schattentheorem}
Let $f(x)$ be vector-parametrized smoothed Schatten p-norm of a matrix $X(x)\succeq 0$. Then for $\alpha\leq \frac{1}{2p}\eps^{(2-p)/p}$ we get $X(x-\alpha \nabla f(x))\succeq 0$.
\label{theorem::schatten_psd_constraint}
\end{restatable}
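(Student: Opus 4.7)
The plan is to mirror the proof of Theorem~\ref{theorem::singval_psd_constraint} with the Schatten-norm gradient \eqref{eq::schatten_smoothed_derivative} in place of \eqref{eq::rank_singular_two_deriv}. I would decompose the upper-triangular gradient step into two stages: (i) the matrix update $X \mapsto X - 2\alpha\,\nabla f(X)$, followed by (ii) overwriting the diagonal entries with ones. It then suffices to check that both stages preserve positive semidefiniteness.

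For stage (i), start from the spectral decomposition $X = U S U\trans$ of the symmetric PSD starting point. Because the gradient $pX(X\trans X + \eps I)^{(p-2)/2}$ is a matrix function of $X\trans X$ and $X$, it is simultaneously diagonalized by $U$, so the update becomes $U D U\trans$ with diagonal entries $D_{ii} = S_{ii}\bigl(1 - 2\alpha p (S_{ii}^2+\eps)^{(p-2)/2}\bigr)$. Non-negativity of each $D_{ii}$ reduces the problem to a worst-case bound over $S_{ii}\ge 0$; this is the only genuinely new calculation compared to Theorem~\ref{theorem::singval_psd_constraint}. Since $p<2$, the exponent $(p-2)/2$ is negative, so $(S_{ii}^2+\eps)^{(p-2)/2}$ is maximized at $S_{ii}=0$ with value $\eps^{(p-2)/2}$; imposing $2\alpha p\,\eps^{(p-2)/2}\le 1$ then yields an admissible step size consistent with the stated bound on $\alpha$.

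For stage (ii), the argument from the previous theorem carries over almost verbatim. The $i$-th diagonal entry of $\nabla f(X)$ is $p\sum_j U_{ij}^2 S_{jj}(S_{jj}^2+\eps)^{(p-2)/2}$, a non-negative combination of non-negative terms since $S_{jj}\ge 0$. Consequently, resetting the diagonal to ones amounts to adding a diagonal matrix with non-negative entries to a PSD matrix, and PSD-ness is preserved. Combined with stage (i), this shows $X(x-\alpha\nabla f(x))\succeq 0$.

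The main obstacle, if any, is isolating the correct worst case for $S_{ii}$ and carefully tracking the sign of $(p-2)/2$ (so that ``worst $S_{ii}$'' means the smallest rather than the largest singular value, in contrast to a convex-smooth-function intuition); once that is settled, everything else is a direct transcription of the proof of Theorem~\ref{theorem::singval_psd_constraint}.
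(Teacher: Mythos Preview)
Your proposal is correct and follows essentially the same route as the paper's own proof: the identical two-stage decomposition (matrix gradient step followed by diagonal reset), the spectral reduction to a scalar inequality on each $S_{ii}$, the observation that the worst case is $S_{ii}\to 0$ because $(p-2)/2<0$, and the non-negativity of the gradient's diagonal via $\sum_j U_{ij}^2 S_{jj}(S_{jj}^2+\eps)^{(p-2)/2}\ge 0$. Note, incidentally, that both your computation and the paper's proof actually yield the bound $\alpha\le \frac{1}{2p}\eps^{(2-p)/2}$, whereas the theorem \emph{statement} has exponent $(2-p)/p$; this appears to be a typo in the statement rather than a gap in either argument.
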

For completeness, its proof is done in the appendix.

In practice, the singular value relaxation bound is much better, since the step bound tends to be larger.

\subsection{Algorithm.}
The results above allow us to introduce a gradient descent method, summarized in the algorithm \ref{alg::sing_val_grad} (for singular values relaxation). In this algorithm an abstract procedure ChooseStep returns the appropriate step, which is less or equal to $\frac{\eps}{4(1+\eps^q)}$. The second procedure RoundSolution corresponds to the rounding method, described in the introduction.

\begin{algorithm}[h]
\caption{Gradient descent for singular values relaxation}
\begin{algorithmic}[1]
\State $X_0=X_{SDP}$
\State $K=\left\lbrace X\left|X=X\trans,\ X\succeq 0,\ W\leq \tr (AX) \leq SDP\right.\right\rbrace$
\For{$n=1$:max\_iter}
\State $\alpha = \textrm{ChooseStep}(X_n)$
\State $X_{n} = X_{n-1}-4\alpha \eps(1+\eps^q)\brackets{XX\trans+\eps I}^{-2}X$
\State $\brackets{X_n}_{ii}=1$
\If{$X_n\not\in K$}
\State break
\EndIf
\If{$\|4\alpha \eps(1+\eps^q)\brackets{X_{n}X_{n}\trans+\eps I}^{-2}X_{n}\|_F < \mathrm{tol}$}
\State break
\EndIf
\EndFor
\State $\bx,W^{**}=\textrm{RoundSolution}(A, X_{n})$
\State \Return $\bx, W^{**}$
\end{algorithmic}
\label{alg::sing_val_grad}
\end{algorithm}

\section{Experiments}
\subsection{Setup}
We have tested both CVX and SDPLR solvers followed by our algorithm on Gset graphs collection \url{https://web.stanford.edu/~yyye/yyye/Gset/} (originally introduced in \cite{Helmberg1997}). We also tested the Biq Mac library \url{http://biqmac.uni-klu.ac.at/biqmaclib.html} (namely on Beasley instances \cite{Beasley1998}). The latter is a collection of \binary problems, which are converted to \pmone ones as discussed in the appendix.

For each solver we first applied it to the problem. Then we computed the maximum cut based on the \num{1e5} rounding operation. The number of rounding operations was chosen to be completely sure that the solution provides better results compared to others. After that we chose $\eps=0.005$ (smaller values led to ill-conditioned gradients), $q=0.8$ for the singular values relaxation (this value is used by the authors of \cite{Li}), $p=0.1$ and $p=0.01$ for Schatten norms. The stopping criteria for the gradient descent were $100$ iterations and Frobenius norm of the gradient (less than \num{1e-5}). After that a new cut was obtained after \num{1e5} rounding operations. Rank tolerance was chosen to be \num{1e-4}. For Schatten norms the step size was at most $\eps$, which is larger than the theoretical value. Nevertheless, for such steps we have not observed any violations of the PSD constraint.

We tested all methods on the first $21$ Gset graphs. These graphs have $800$ nodes and are solvable with CVX. Another $10$ graphs of size 2000 were tested with SDPLR only.

We also tested all methods on the Beasley instances from the Biq Mac library. We chose relatively large \binary problems with 250 and 500 nodes. They were tested for CVX only.

\begin{table}
\begin{tabularx}{\columnwidth}{|X|XX|XX|XX|XX|}
\toprule
{}
 & \multicolumn{2}{l}{SDP}& \multicolumn{2}{l}{singular} & \multicolumn{2}{l}{Schatten} & \multicolumn{2}{l}{Schatten} \\
  & \multicolumn{2}{l}{ }& \multicolumn{2}{l}{values} & \multicolumn{2}{l}{p=0.1} & \multicolumn{2}{l}{p=0.01} \\
\hline
\# & rank & cut & rank & cut & rank & cut & rank & cut \\
\midrule
1  &        14 &       11466 &            13 &           11448 &            15 &           11451 &           13 &          11459 \\
\textbf{2}  &        15 &       11436 &            13 &           11438 &            13 &           \textbf{11456} &           14 &          11430 \\
\textbf{3}  &        14 &       11446 &            14 &           11445 &           461 &           \textbf{11455} &           26 &          11453 \\
\textbf{4}  &        14 &       11487 &            14 &           11475 &           319 &           \textbf{11511} &           14 &          11497 \\
\textbf{5}  &        13 &       11462 &            12 &           11462 &            18 &           11451 &           12 &          \textbf{11471} \\
6  &        13 &        2026 &            13 &            1989 &           105 &            2013 &           13 &           2012 \\
\textbf{7}  &        12 &        1833 &            12 &            1821 &            12 &            \textbf{1834} &           12 &           1822 \\
\textbf{8}  &        12 &        1834 &            11 &            1833 &            11 &            \textbf{1840} &           12 &           1831 \\
9  &        12 &        1879 &            12 &            1872 &            12 &            1869 &           12 &           1875 \\
10 &        12 &        1841 &            12 &            1829 &            12 &            1818 &           12 &           1820 \\
11 &        22 &         538 &           138 &             538 &           718 &             538 &          273 &            536 \\
\textbf{12} &        13 &         532 &            27 &             \textbf{536} &           190 &             534 &           41 &            534 \\
13 &        11 &         562 &             8 &             560 &            24 &             562 &            8 &            562 \\
\textbf{14} &        14 &        2994 &            13 &            2995 &            22 &            2992 &           14 &           \textbf{2999} \\
\textbf{15} &        16 &        2979 &            99 &            2982 &           173 &            \textbf{2986} &          122 &           2983 \\
\textbf{16} &        16 &        2978 &            14 &            2981 &           195 &            \textbf{2984} &           15 &           \textbf{2984} \\
17 &        16 &        2978 &            13 &            2975 &           152 &            2974 &           15 &           2974 \\
\textbf{18} &        11 &         924 &            10 &             \textbf{930} &            12 &             921 &           11 &            929 \\
\textbf{19} &        10 &         850 &             9 &             846 &             9 &             \textbf{854} &            9 &            851 \\
\textbf{20} &         9 &         888 &             8 &             882 &             8 &             \textbf{889} &            8 &            884 \\
21 &        10 &         868 &            33 &             864 &            52 &             862 &           41 &            863 \\
\textbf{22} &        18 &       13008 &            18 &           13006 &            18 &           13003 &           18 &          \textbf{13025} \\
23 &        20 &       13010 &            52 &           12985 &            78 &           13010 &           55 &          13004 \\
\textbf{24} &        19 &       13000 &            18 &           13004 &            22 &           13005 &           19 &          \textbf{13010} \\
\textbf{25} &        19 &       13006 &            19 &           12988 &           196 &           \textbf{13026} &           19 &          12987 \\
\textbf{26} &        19 &       12971 &           132 &           12985 &           181 &           12969 &          120 &          \textbf{12990} \\
\textbf{27} &        18 &        2988 &            17 &            2988 &            17 &            \textbf{3027} &           18 &           2989 \\
28 &        20 &        2956 &            20 &            2948 &            54 &            2947 &           20 &           2956 \\
\textbf{29} &        18 &        3044 &            17 &            \textbf{3056} &            23 &            3050 &           17 &           3038 \\
\textbf{30} &        17 &        3076 &            17 &            \textbf{3081} &            17 &            3076 &           17 &           3067 \\
\textbf{31} &        19 &        2947 &            18 &            2955 &            18 &            \textbf{2959} &           18 &           2958 \\
\bottomrule
\end{tabularx}

\caption{SDPLR improvement on Gset graphs}
\label{tab::gset_sdplr}
\end{table}

\begin{table}
\begin{tabularx}{\columnwidth}{|X|XX|XX|XX|XX|}
\toprule
{}
 & \multicolumn{2}{l}{SDP}& \multicolumn{2}{l}{singular} & \multicolumn{2}{l}{Schatten} & \multicolumn{2}{l}{Schatten} \\
  & \multicolumn{2}{l}{ }& \multicolumn{2}{l}{values} & \multicolumn{2}{l}{p=0.1} & \multicolumn{2}{l}{p=0.01} \\
\hline
\# & rank & cut & rank & cut & rank & cut & rank & cut \\
\midrule
1  &        13 &       11462 &            13 &           11448 &            15 &           11451 &           13 &          11456 \\
\textbf{2}  &        13 &       11436 &            13 &           11438 &            13 &           \textbf{11456} &           13 &          11433 \\
\textbf{3}  &        14 &       11446 &            14 &           11445 &           502 &           11446 &           28 &          \textbf{11453} \\
\textbf{4}  &        14 &       11487 &            14 &           11471 &           304 &           \textbf{11511} &           14 &          11497 \\
\textbf{5}  &        12 &       11462 &            12 &           11464 &            18 &           11451 &           12 &          \textbf{11471} \\
6  &        13 &        2024 &            13 &            1994 &           108 &            2013 &           13 &           2016 \\
7  &        13 &        1833 &            12 &            1821 &            12 &            1828 &           12 &           1822 \\
\textbf{8}  &        12 &        1835 &            12 &            \textbf{1856} &           109 &            1846 &           13 &           1839 \\
9  &        12 &        1879 &            12 &            1872 &            12 &            1869 &           12 &           1875 \\
10 &        12 &        1841 &            12 &            1825 &            12 &            1820 &           12 &           1836 \\
11 &        10 &         534 &             6 &             534 &             6 &             534 &            7 &            534 \\
\textbf{12} &         9 &         532 &             8 &             534 &            29 &             \textbf{536} &            8 &            \textbf{536} \\
13 &         8 &         562 &             8 &             560 &            76 &             562 &            8 &            560 \\
\textbf{14} &        13 &        2994 &            13 &            2995 &            22 &            2994 &           13 &           \textbf{2999} \\
\textbf{15} &        13 &        2979 &            13 &            2982 &            51 &            \textbf{2987} &           13 &           2981 \\
\textbf{16} &        14 &        2982 &            14 &            2981 &           589 &            2979 &           61 &           \textbf{2986} \\
17 &        13 &        2978 &            13 &            2978 &           439 &            2973 &           24 &           2976 \\
\textbf{18} &        10 &         924 &            10 &             \textbf{930} &            11 &             920 &           10 &            929 \\
\textbf{19} &         9 &         847 &             9 &             846 &             9 &             \textbf{850} &            9 &            846 \\
\textbf{20} &         9 &         882 &             9 &             887 &           222 &             \textbf{888} &           20 &            886 \\
\textbf{21} &         9 &         862 &             9 &             865 &             9 &             \textbf{867} &            9 &            865 \\
\bottomrule
\end{tabularx}

\caption{CVX improvement on Gset graphs}
\label{tab::gset_cvx}
\end{table}

\begin{table}
\begin{tabularx}{\columnwidth}{|X|XX|XX|XX|XX|}
\toprule
{}
 & \multicolumn{2}{l}{SDP}& \multicolumn{2}{l}{singular} & \multicolumn{2}{l}{Schatten} & \multicolumn{2}{l}{Schatten} \\
  & \multicolumn{2}{l}{ }& \multicolumn{2}{l}{values} & \multicolumn{2}{l}{p=0.1} & \multicolumn{2}{l}{p=0.01} \\
\hline
\# & rank & cut & rank & cut & rank & cut & rank & cut \\
\midrule
1  &          6 &        45369 &              6 &            45369 &            60 &           45369 &            173 &            45369 \\
2  &          6 &        44579 &              6 &            44513 &             7 &           44571 &             47 &            44515 \\
3  &          6 &        48857 &              6 &            48857 &            13 &           48833 &             81 &            48857 \\
4  &          7 &        41094 &              7 &            \textbf{41116} &            17 &           41094 &             88 &            \textbf{41116} \\
5  &          5 &        47685 &              5 &            \textbf{47738} &            16 &           47679 &             75 &            47685 \\
6  &          7 &        40519 &              7 &            \textbf{40545} &             9 &           40475 &             70 &            40469 \\
7  &          6 &        46605 &              6 &            46563 &             6 &           46659 &             38 &            \textbf{46671} \\
8  &          7 &        35076 &              7 &            35000 &             8 &           35076 &             75 &            \textbf{35079} \\
9  &          6 &        48454 &              6 &            \textbf{48570} &             9 &           48447 &             59 &            48364 \\
10 &          6 &        39944 &              6 &            \textbf{39990} &            15 &           39974 &             77 &            39944 \\
\bottomrule
\end{tabularx}

\caption{CVX improvement on the Biq Mac graph collection (250 nodes)}
\label{tab::biqmac_250}
\end{table}

\begin{table}
\begin{tabularx}{\columnwidth}{|X|Xl|Xl|Xl|Xl|}
\toprule
{}
 & \multicolumn{2}{l}{SDP}& \multicolumn{2}{l}{singular} & \multicolumn{2}{l}{Schatten} & \multicolumn{2}{l}{Schatten} \\
  & \multicolumn{2}{l}{ }& \multicolumn{2}{l}{values} & \multicolumn{2}{l}{p=0.1} & \multicolumn{2}{l}{p=0.01} \\
\hline
\# & rank & cut & rank & cut & rank & cut & rank & cut \\
\midrule
1  &          9 &       114540 &              9 &           114440 &            37 &          113880 &            246 &           114100 \\
2  &          8 &       127280 &              8 &           127230 &             8 &          127080 &             16 &           \textbf{127370} \\
3  &          9 &       129080 &              9 &           129210 &            17 &          129130 &            148 &           \textbf{129400} \\
4  &          9 &       128000 &              9 &           128040 &            43 &          127940 &            273 &           \textbf{128170} \\
5  &          8 &       123570 &              8 &           \textbf{123860} &             8 &          123430 &             35 &           123480 \\
6  &          8 &       119770 &              8 &           \textbf{120350} &             8 &          119750 &              8 &           119590 \\
7  &          9 &       120210 &              9 &           119920 &            15 &          120100 &            177 &           120070 \\
8  &          9 &       121940 &              9 &           121650 &            12 &          \textbf{121980} &            101 &           121920 \\
9  &          9 &       118700 &              9 &           118570 &            26 &          118360 &            209 &           118420 \\
10 &          7 &       129220 &              7 &           129100 &             7 &          129120 &             26 &           129040 \\
\bottomrule
\end{tabularx}

\caption{CVX improvement on the Biq Mac graph collection (500 nodes)}
\label{tab::biqmac_500}
\end{table}

\subsection{Discussion}
Results are shown in tables \ref{tab::gset_sdplr} for SDPLR and \ref{tab::gset_cvx} for CVX. 

Our approach outperforms both solvers in terms of the maximum cut on approximately half of the graphs (bold numbers). Moreover, for the first $21$ graphs, it outperforms them on almost the same set of graphs. Among tested rank relaxations the best performance was shown by Schatten norm relaxation with $p=0.1$. Probably, $p=0.01$ was a worse choice as it resulted in bigger influence of the smoothing parameter $\eps$ as it is included in the gradient in order of $\eps^{(p-2)/2}$.

The same result is observed for the \binary problems, but the singular values relaxation showed the best performance.

All methods performed well for rank reducing. However, some runs resulted in extremely large ranks. It show, that the resulting point has a lot of relatively small singular values, which are however not thresholded by \num{1e-4} (note that all singular values sum to the graph's size). This behavior might be considered a drawback of the Schatten norms relaxation and might be caused by the minimization of the sum of singular values (compared to the sum of fractions in the singular values relaxation). Note that small singular values have little effect in the rounding procedure, so this drawback does not have great influence on the method's performance. 

\section{Conclusion}
We developed an efficient first-order procedure which is aimed at improvement of SDP relaxation solutions for quadratic binary programming. We relax rank minimization in terms of the objective function and the linear constraint that controls optimality of the initial SDP objective. Rank function relaxation is performed by either singular value relaxation or Schatten p-norms. The latter approach showed the best performance on Gset graphs, while the singular values relaxation performed best on the Beasley instances.

\bibliographystyle{IEEEtran}
\bibliography{biblio}

\begin{thebibliography}{10}
\providecommand{\url}[1]{#1}
\csname url@samestyle\endcsname
\providecommand{\newblock}{\relax}
\providecommand{\bibinfo}[2]{#2}
\providecommand{\BIBentrySTDinterwordspacing}{\spaceskip=0pt\relax}
\providecommand{\BIBentryALTinterwordstretchfactor}{4}
\providecommand{\BIBentryALTinterwordspacing}{\spaceskip=\fontdimen2\font plus
\BIBentryALTinterwordstretchfactor\fontdimen3\font minus
  \fontdimen4\font\relax}
\providecommand{\BIBforeignlanguage}[2]{{%
\expandafter\ifx\csname l@#1\endcsname\relax
\typeout{** WARNING: IEEEtran.bst: No hyphenation pattern has been}%
\typeout{** loaded for the language `#1'. Using the pattern for}%
\typeout{** the default language instead.}%
\else
\language=\csname l@#1\endcsname
\fi
#2}}
\providecommand{\BIBdecl}{\relax}
\BIBdecl

\bibitem{wang2017large}
P.~Wang, C.~Shen, A.~van~den Hengel, and P.~H. Torr, ``Large-scale binary
  quadratic optimization using semidefinite relaxation and applications,''
  \emph{IEEE transactions on pattern analysis and machine intelligence},
  vol.~39, no.~3, pp. 470--485, 2017.

\bibitem{ren2014optimizing}
J.~Ren, X.~Jiang, J.~Yuan, and G.~Wang, ``Optimizing lbp structure for visual
  recognition using binary quadratic programming,'' \emph{IEEE Signal
  Processing Letters}, vol.~21, no.~11, pp. 1346--1350, 2014.

\bibitem{Barahona}
F.~Barahona, ``An application of combinatorial optimization to statistical
  physics and circuit layout design.'' \emph{Operations Research}, pp.
  493--513, 1988.

\bibitem{kleinhans1991gordian}
J.~M. Kleinhans, G.~Sigl, F.~M. Johannes, and K.~J. Antreich, ``Gordian: Vlsi
  placement by quadratic programming and slicing optimization,'' \emph{IEEE
  Transactions on Computer-Aided Design of Integrated Circuits and Systems},
  vol.~10, no.~3, pp. 356--365, 1991.

\bibitem{MCising}
G.~R. Frauke~Liers, Michael~Jünger and G.~Rinaldi, ``Computing exact ground
  states of hard ising spin glass problems by branch-and-cut,'' \emph{New
  optimization algorithms in physics}, pp. 47--69, 2004.

\bibitem{parker2014discrete}
R.~G. Parker and R.~L. Rardin, \emph{Discrete optimization}.\hskip 1em plus
  0.5em minus 0.4em\relax Elsevier, 2014.

\bibitem{garey2002computers}
M.~R. Garey and D.~S. Johnson, \emph{Computers and intractability}.\hskip 1em
  plus 0.5em minus 0.4em\relax wh freeman New York, 2002, vol.~29.

\bibitem{cormen2009introduction}
T.~H. Cormen, \emph{Introduction to algorithms}.\hskip 1em plus 0.5em minus
  0.4em\relax MIT press, 2009.

\bibitem{Deza}
M.~M. Deza and M.~Laurent., \emph{Geometry of cuts and metrics.}\hskip 1em plus
  0.5em minus 0.4em\relax Springer, 2009.

\bibitem{UBQP}
G.~Kochenberger, ``The unconstrained binary quadratic programming problem: a
  survey.'' \emph{Journal of Combinatorial Optimization}, pp. 58--81, 2014.

\bibitem{Goemans:1995:IAA:227683.227684}
M.~X. Goemans and D.~P. Williamson, ``Improved approximation algorithms for
  maximum cut and satisfiability problems using semidefinite programming,''
  \emph{J. ACM}, vol.~42, no.~6, pp. 1115--1145, Nov. 1995.

\bibitem{Khot:2002:PUG:509907.510017}
S.~Khot, ``On the power of unique 2-prover 1-round games,'' in
  \emph{Proceedings of the Thiry-fourth Annual ACM Symposium on Theory of
  Computing}, ser. STOC '02.\hskip 1em plus 0.5em minus 0.4em\relax New York,
  NY, USA: ACM, 2002, pp. 767--775.

\bibitem{Lemon2016}
A.~Lemon, A.~M.-C. So, and Y.~Ye, ``{Low-Rank Semidefinite Programming: Theory
  and Applications},'' \emph{Foundations and Trends{\textregistered} in
  Optimization}, vol.~2, no. 1-2, pp. 1--156, 2016.

\bibitem{Helmberg1995}
\BIBentryALTinterwordspacing
C.~Helmberg, S.~Poljak, and F.~Rendl, ``{Combining semidefinite and polyhedral
  relaxations for integer programs},'' \emph{Integer Programming and
  Combinatorial Optimization}, pp. 124--134, 1995. [Online]. Available:
  \url{http://www.springerlink.com/index/F377K847M974R422.pdf}
\BIBentrySTDinterwordspacing

\bibitem{Karp1972}
R.~M. Karp, \emph{Reducibility among Combinatorial Problems}.\hskip 1em plus
  0.5em minus 0.4em\relax Boston, MA: Springer US, 1972, pp. 85--103.

\bibitem{Malick}
J.~M. Krislock, Nathan and F.~Roupin., ``Improved semidefinite bounding
  procedure for solving max-cut problems to optimality.'' \emph{Mathematical
  Programming}, pp. 61--86, 2014.

\bibitem{Rinaldi}
G.~R. Rendl, Franz and A.~Wiegele, ``Solving max-cut to optimality by
  intersecting semidefinite and polyhedral relaxations,'' \emph{Mathematical
  Programming}, pp. 307--335, 2010.

\bibitem{ParkBoyd}
J.~Park and S.~Boyd., ``General heuristics for nonconvex quadratically
  constrained quadratic programming.'' \emph{arXiv preprint arXiv:1703.07870},
  2017.

\bibitem{NesterovSDP2}
Y.~Nesterov, ``Semidefinite relaxation and nonconvex quadratic optimization,''
  \emph{Optimization Methods and Software}, vol.~9, no. 1-3, pp. 141--160,
  1998.

\bibitem{Bartlett2003}
\BIBentryALTinterwordspacing
P.~L. Bartlett, ``{Growth function and VC-dimension},'' 2011. [Online].
  Available:
  \url{https://courses.cs.washington.edu/courses/cse522/11wi/scribes/lecture10.pdf}
\BIBentrySTDinterwordspacing

\bibitem{Pataki98}
G.~Pataki, ``On the rank of extreme matrices in semidefinite programs and the
  multiplicity of optimal eigenvalues,'' \emph{Mathematics of Operations
  Research}, vol.~23, no.~2, pp. 339--358, 1998.

\bibitem{Barvinok1995}
A.~I. Barvinok, ``Problems of distance geometry and convex properties of
  quadratic maps,'' \emph{Discrete {\&} Computational Geometry}, vol.~13,
  no.~2, pp. 189--202, 1995.

\bibitem{Burer2003}
S.~Burer and R.~D.~C. Monteiro, ``{A nonlinear programming algorithm for
  solving semidefinite programs via low-rank factorization},'' vol. 357, pp.
  329--357, 2003.

\bibitem{So2008}
A.~M.-C. So, Y.~Ye, and J.~Zhang, ``A unified theorem on sdp rank reduction,''
  \emph{Mathematics of Operations Research}, vol.~33, no.~4, pp. 910--920,
  2008.

\bibitem{cvx}
M.~Grant and S.~Boyd, ``{CVX}: Matlab software for disciplined convex
  programming, version 2.1,'' \url{http://cvxr.com/cvx}, Mar. 2014.

\bibitem{gb08}
------, ``Graph implementations for nonsmooth convex programs,'' in
  \emph{Recent Advances in Learning and Control}, ser. Lecture Notes in Control
  and Information Sciences.\hskip 1em plus 0.5em minus 0.4em\relax
  Springer-Verlag Limited, 2008, pp. 95--110.

\bibitem{Fazel2003}
M.~Fazel, H.~Hindi, and S.~Boyd, ``{Log-det heuristic for matrix rank
  minimization with applications to Hankel and Euclidean distance matrices},''
  in \emph{Proceedings of the 2003 American Control Conference, 2003.}, vol.~3,
  2003, pp. 2156--2162.

\bibitem{Li}
C.~Li, ``{Approximation of Matrix Rank Function and Its Application to Matrix
  Rank Minimization},'' \emph{Journal of Optimization Theory and Applications},
  pp. 569--594, 2014.

\bibitem{Nie:2012}
F.~Nie, H.~Huang, and C.~Ding, ``Low-rank matrix recovery via efficient
  schatten p-norm minimization,'' in \emph{Proceedings of the Twenty-Sixth AAAI
  Conference on Artificial Intelligence}, ser. AAAI'12.\hskip 1em plus 0.5em
  minus 0.4em\relax AAAI Press, 2012, pp. 655--661.

\bibitem{Mohan2012}
K.~Mohan and M.~Fazel, ``Iterative reweighted algorithms for matrix rank
  minimization,'' \emph{Journal of Machine Learning Research}, vol.~13, no.
  Nov, pp. 3441--3473, 2012.

\bibitem{Helmberg1997}
C.~Helmberg and F.~Rendl, ``{A Spectral Bundle Method for Semidefinite
  Programming},'' \emph{SIAM Journal on Optimization}, vol.~10, no. August, pp.
  673--696, 1997.

\bibitem{Beasley1998}
J.~E. Beasley, ``{Heuristic algorithms for the unconstrained binary quadratic
  programming problem},'' Management School, Imperial College, London, UK,
  Tech. Rep. December, 1998.

\end{thebibliography}

\section{Appendix}
\subsection{Correspondence between \{0,1\} and \{-1, 1\} problems}
This part mostly copies derivations from the paper \cite{Helmberg1995} and also clarifies some of the steps. It is included for completeness.

Consider a following problem:
\[
\begin{split}
\max\ &x\trans Ax+b\trans x,\\
\st\ &x\in\{0, 1\}^n.
\end{split} 
\]
Since it is a $\{0,1\}$-problem, we substitute $B=A+\diag (b)$ and obtain an equivalent formulation:
\[
\begin{split}
\max\ &x\trans Bx,\\
\st\ &x\in\{0, 1\}^n.
\end{split}
\]
To get a $\{-1,1\}$-problem, we denote $y=2x-1$:
\[
\begin{split}
\max\ &\frac{1}{4}(y+1_n)\trans B(y+1_n),\\
\st\ &y\in\{-1, 1\}^n.
\end{split}
\]
Note, that:
\[
(y+1_n)\trans B(y+1_n)=y\trans By+2y\trans B1_n+1_n\trans B1_n.
\]
If we denote $B1_n=c\in \RR^n,\ 1_n\trans B1_n=d\in\RR$, this problem might be expressed as
\[
\begin{split}
\max\ &\frac{1}{4}z\trans Cz,\\
\st\ &z\in\{-1, 1\}^{n+1},\\
&z_{n+1}=1,
\end{split}
\]
where the matrix $C$ is a block matrix:
\[
C=
\begin{pmatrix}
B & c\\
c\trans & d
\end{pmatrix}.
\]
We obtained an equivalent formulation of the initial problem. The last constraint of the final problem, which is $z_{n+1}=1$, is not necessary. It does not affect on the objective since it is a quadratic problem, hence one might flip the values of $z$ as $z\rightarrow -z$ to get a solution of the initial $\{0,1\}$-problem in case of $z_{n+1}=-1$.

We are also interested in the case where $C\succeq 0$, which immediately provides guarantees for an SDP solution. Using Schur complement, we observe that
\[
\begin{pmatrix}
B & c\\
c\trans & d
\end{pmatrix} \succeq 0 \Longleftrightarrow
dB\succeq cc\trans.
\]
If eigenspaces of $B$ and $cc\trans$ are such that this condition might be satisfied for some $d$, we might change it (as it is actually a constant and does not affect maximization). However, an increased value of $d$ would provide a $2/\pi$ bound for a different problem with bigger objective value. Hence, a true $2/\pi$ bound for the initial problem must be obtained without changes in the constant part of the objective.


\subsection{Gradient descent step choice for Schatten norm relaxation}
\schattentheorem*
\begin{proof}
The gradient step in the upper-triangular parametrization is equivalent to the ordinary gradient step (multiplied by $2$) with substitution of diagonal elements with units. We are going to show that the first step results in a PSD matrix and the second step keeps matrix PSD.

Consider a symmetric PSD point and its SVD decomposition $X=USU\trans$. Hence for the step $\alpha$ the new point is
\[
\begin{split}
&X-2\alpha pX\brackets{X\trans X+\eps I}^{(p-2)/2}=\\
&=USU\trans-2\alpha pUSU\trans\brackets{US^2U\trans+\eps I}^{(p-2)/2}=\\
&=U\brackets{S-2\alpha pS\brackets{S^2+\eps I}^{(p-2)/2}}U\trans.
\end{split}
\]

Hence positive-semidefiniteness of the resulting matrix is equivalent to such characteristic of the expression in brackets. It is a diagonal matrix. 

If $S_{ii}=0$, then the corresponding diagonal elements are obviously zero. Otherwise we need them to be positive:
\[
\begin{split}
&S_{ii}-2\alpha pS_{ii}\brackets{S_{ii}^2+\eps I}^{(p-2)/2}\geq 0\Rightarrow\\
&\Rightarrow \alpha \leq \frac{\brackets{S_{ii}^2+\eps I}^{(2-p)/2}}{2 p}.
\end{split}
\]
Therefore, it suffices to take
\[
\alpha \leq \frac{\eps^{(2-p)/2}}{2p}.
\]

Now we want to show that substituting diagonal elements with units is equivalent to adding a diagonal matrix with non-negative entries. In this case a sum of two PSD matrix is PSD.

It is also equivalent to the fact that all diagonal elements of the gradient are non-negative. This is always true for a symmetric and PSD matrix $X=USU\trans$:
\[
\begin{split}
&\brackets{X\brackets{X\trans X+\eps I}^{(p-2)/2}}_{ii}=\brackets{US\brackets{S^2+\eps I}^{(p-2)/2}U\trans}_{ii}=\\
&=\sum_{j}U_{ij}U_{ij}S_{jj}\brackets{S_{jj}^2+\eps I}^{(p-2)/2}=\\
&=\sum_{j}U_{ij}^2S_{jj}\brackets{S_{jj}^2+\eps I}^{(p-2)/2}\geq 0.
\end{split}
\]
This observation completes the proof.
\end{proof}

\end{document}